\newtheorem{theorem}{Theorem}[section]
\theoremstyle{definition}
\newtheorem{remark}[theorem]{Remark}
\numberwithin{equation}{section}
\begin{document}

\title[Lawvere's Frobenius reciprocity]{Lawvere's Frobenius reciprocity, the modular connections of Grandis and Dilworth's abstract principal ideals}

\author{Amartya Goswami}
\address{[1] Department of Mathematics and Applied Mathematics, University of Johannesburg, Cnr Kingsway and University Road, 
	Johannesburg, 2006, Gauteng, South Africa. [2] National Institute for Theoretical and Computational Sciences (NITheCS),  South Africa}
\email{agoswami@uj.ac.za}

\author{Zurab Janelidze}
\address{[1] Mathematics Division, Department of Mathematical Sciences, Stellenbosch University, Merriman Street, Stellenbosch, 7602, Western Cape, South Africa. [2] National Institute for Theoretical and Computational Sciences (NITheCS),  South Africa}
\email{zurab@sun.ac.za}

\author{Graham Manuell}

\address{[1] Mathematics Division, Department of Mathematical Sciences, Stellenbosch University, Merriman Street, Stellenbosch, 7602, Western Cape, South Africa. [2] National Institute for Theoretical and Computational Sciences (NITheCS),  South Africa}
\email{graham@manuell.me}

\begin{abstract}
The purpose of this short note is to fill a gap in the literature: Frobenius reciprocity in the theory of doctrines is closely related to modular connections in projective homological algebra and the notion of a principal element in abstract commutative ideal theory. These concepts are based on particular properties of Galois connections which play an important role also in the abstract study of group-like structures from the perspective of categorical/universal algebra; such role stems from a classical and basic result in group theory: the lattice isomorphism theorem.
\end{abstract}

\keywords{Abstract ideal theory, adjunction, connection, Frobenius reciprocity, Grandis exact category, lattice, lattice isomorphism theorem, modular connection, modular lattice, noetherian form, principal element, principal mapping, protomodular category, quantale, range-closed mapping, residuated map}

\subjclass{06A06, 13A15, 03G30, 20J99, 18G50, 06C05, 18E13, 08B05}

\maketitle

\section{Introduction}
This paper concerns three fundamental concepts: \emph{Frobenius reciprocity} from categorical logic, \emph{modular connections} from non-abelian homological algebra and \emph{principal elements} from abstract ideal theory.
\begin{itemize}
  \item Frobenius reciprocity is a condition relating images, preimages and intersection that makes existential quantification interact well with logical conjunction \cite{Law70}.
  \item Modular connections arise in an attempt to capture the homological behaviour of modules in lattice theoretical terms (see \cite{Gra12,Gra13} and the references there).
  \item Principal elements in a multiplicative lattice approximate the behaviour of principal ideals of a commutative ring (with identity) in the integral commutative quantale of all ideals \cite{Dil62}.
\end{itemize}
We will explain how each of these concepts are instances of the same idea, and also point out links with closely related concepts in residuation theory \cite{BJ72} and various interlinked abstract approaches in the study of group-like structures \cite{MB99,GJ19,BB04,Urs73}.

\section{Connections}

Given posets $P$ and $Q$, by a \emph{connection} we mean a relation $R\colon P\to Q$ such that 
$$a\leqslant b\; R\; c\leqslant d\quad\Rightarrow\quad a\;R\; d,$$
for all $a$, $b\in P$ and $c$, $d\in Q$. Note that a relation $R\colon P\to Q$ is a connection if and only if the opposite relation $R^\mathsf{op}$ is connection between dual posets, $R^\mathsf{op}\colon Q^\mathsf{op}\to P^\mathsf{op}$.

\begin{remark}
  What we call connections in this paper are often referred to as ``weakening relations'' in, especially, computer science literature (see e.g., \cite{JS23}, and the references there).
  When posets are viewed as categories, connections are $\mathbf{2}$-valued profunctors.
\end{remark}

We say that a connection $R$ is a \emph{left adjoint connection} when there is a map $f_R\colon P\to Q$ (called a \emph{left adjoint}) such that 
$$f_R(x)\leqslant y\quad\Leftrightarrow\quad x\;R\;y,$$ for all $x\in P$ and $y\in Q$. Symmetrically, we say that a connection is a \emph{right adjoint connection} when there is a map $f^R\colon Q\to P$ (called a \emph{right adjoint}) such that 
$$x\;R\;y\quad\Leftrightarrow\quad x\leqslant f^R(y).$$
Note that the concepts of left and right adjoint maps are dual to each other: left/right adjoint for $R$ is the same as a right/left adjoint for $R^\mathsf{op}$. 

Piecing the conditions defining left/right adjoint maps together, we get:
$$f_R(x)\leqslant y\quad\Leftrightarrow\quad x\;R\;y\quad\Leftrightarrow\quad x\leqslant f^R(y).$$ 
We call a connection $R$ admitting both a left adjoint and a right adjoint, an \emph{adjoint connection}. Of course, every adjoint connection $R$ gives rise to a Galois connection $(f_R,f^R)$, since the middle expression in the pair of equivalences above can be dropped:
$$f_R(x)\leqslant y\quad\Leftrightarrow\quad x\leqslant f^R(y).$$ 
But it can just as well be inserted back: define $R$ by these equivalent formulas. It is obvious that the relation reconstructed from $(f_R,f^R)$ will be the $R$ we started with. The following result then gives that there is a one-to-one correspondence between adjoint connections and Galois connections. The left adjoints in adjoint connections are known as ``residuated mappings'' (see e.g., \cite{BJ72}).

\begin{theorem}\label{ThmB}
Every connection $R\colon P\to Q$ has at most one left adjoint map $f_R$. Moreover, if $f_R$ exists, it is monotone. In fact, there is a one-to-one correspondence between left adjoint connections $R\colon P\to Q$ and monotone maps $f\colon P\to Q$.
\end{theorem}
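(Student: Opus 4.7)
The plan is to handle the three assertions in order: uniqueness of $f_R$, monotonicity of $f_R$, and the bijective correspondence.

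For uniqueness, I would suppose $f$ and $g$ are both left adjoints of the same connection $R$. Then for every $x \in P$ and $y \in Q$ we have $f(x) \leqslant y \Leftrightarrow x\,R\,y \Leftrightarrow g(x) \leqslant y$. Specialising to $y = f(x)$ gives $g(x) \leqslant f(x)$, and by symmetry the reverse inequality, so antisymmetry in $Q$ forces $f(x) = g(x)$.

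For monotonicity, take $x \leqslant x'$ in $P$. The reflexivity $f_R(x') \leqslant f_R(x')$ translates, via the left adjoint property, into $x'\,R\,f_R(x')$. Since $R$ is a connection and $x \leqslant x'$, we may weaken on the left to obtain $x\,R\,f_R(x')$, which by the left adjoint property gives $f_R(x) \leqslant f_R(x')$. This is really the one step where the definition of a connection is used in an essential way, and it is the conceptual heart of the theorem, though technically quite short.

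For the bijection, in one direction a left adjoint connection $R$ determines its (unique) left adjoint $f_R$, which is monotone by the previous step. In the other direction, given a monotone map $f\colon P \to Q$, I would define a relation $R_f$ by $x\,R_f\,y \Leftrightarrow f(x) \leqslant y$. A short verification, using monotonicity of $f$ and transitivity of $\leqslant$, shows that $a \leqslant b\,R_f\,c \leqslant d$ implies $f(a) \leqslant f(b) \leqslant c \leqslant d$, so $a\,R_f\,d$; hence $R_f$ is a connection with $f$ as left adjoint by construction. It remains to check the two round trips: starting from $R$, the associated $R_{f_R}$ coincides with $R$ directly from the definition of left adjoint; and starting from a monotone $f$, the left adjoint of $R_f$ is $f$ by construction, and is the only one by the uniqueness established above.

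I expect no real obstacle; the only subtlety is recognising that the ``weakening'' clause in the definition of a connection is exactly what is needed for monotonicity of left adjoints, and that the rest is bookkeeping with the adjunction equivalence and antisymmetry.
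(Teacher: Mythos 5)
Your proposal is correct and follows essentially the same route as the paper's proof: uniqueness via instantiating the adjunction equivalence at $y=f(x)$, monotonicity via $x'\;R\;f_R(x')$ and the weakening property of connections, and the bijection by defining $x\;R_f\;y\Leftrightarrow f(x)\leqslant y$ and checking that monotonicity of $f$ makes $R_f$ a connection. No gaps; nothing further is needed.
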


\begin{proof}
Suppose $f_R$ and $f'_R$ are both left adjoint maps for the same connection $R$. Then 
$$f_R(x)\leqslant f'_R(x)\quad\Leftrightarrow\quad x\;R\;f'_R(x)\quad\Leftrightarrow\quad f'_R(x)\leqslant f'_R(x),$$
for every $x\in P$. We have a similar chain of equivalences with the roles of $f'_R$ and $f_R$ swapped around, and so $f'_R=f_R$. Next, we prove that $f_R$ is monotone. Suppose $x\leqslant y$ in $P$. We have $y\;R\;f_R(y)$ since $f_R(y)\leqslant f_R(y)$. Then $x\;R\;f_R(y)$ and so $f_R(x)\leqslant f_R(y)$. 

Clearly, the relation $R$ can be recovered from $f_R$. It is also obvious that a monotone $f$ is the same as $f_R$ for $R$ defined by
$$x\;R\;y\quad\Leftrightarrow\quad f(x)\leqslant y.$$
So all it remains to show is that when $f$ is monotone, $R$ is a connection. Suppose $a\leqslant b\;R\;c\leqslant d$. Then $$f(a)\leqslant f(b)\leqslant c\leqslant d $$
and so $a\;R\;d$.
\end{proof}

Dually to the previous theorem, we have:

\begin{theorem}\label{ThmC}
Every connection $R\colon P\to Q$ has at most one right adjoint map $f^R$. Moreover, if $f^R$ exists, it is monotone. In fact, there is a one-to-one correspondence between right adjoint connections $R\colon P\to Q$ and monotone maps $f\colon Q\to P$.
\end{theorem}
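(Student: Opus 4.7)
The plan is to derive Theorem \ref{ThmC} from Theorem \ref{ThmB} via the duality already flagged at the start of the section. Given $R\colon P\to Q$, I would pass to its opposite $R^\mathsf{op}\colon Q^\mathsf{op}\to P^\mathsf{op}$, which, as noted, is again a connection; and I would use the observation, also made earlier, that a right adjoint $f^R\colon Q\to P$ for $R$ is exactly a left adjoint $f_{R^\mathsf{op}}\colon Q^\mathsf{op}\to P^\mathsf{op}$ for $R^\mathsf{op}$, the underlying function being the same in each case.

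Applying Theorem \ref{ThmB} to $R^\mathsf{op}$ then yields: (i) at most one left adjoint $f_{R^\mathsf{op}}$ exists; (ii) if it exists, it is monotone as a map $Q^\mathsf{op}\to P^\mathsf{op}$; and (iii) there is a one-to-one correspondence between left adjoint connections $Q^\mathsf{op}\to P^\mathsf{op}$ and monotone maps $Q^\mathsf{op}\to P^\mathsf{op}$. Since a function $Q\to P$ is monotone if and only if the same function viewed as $Q^\mathsf{op}\to P^\mathsf{op}$ is monotone, translating (i)--(iii) back across the two dualities gives precisely the uniqueness, monotonicity, and correspondence statements of Theorem \ref{ThmC}.

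There is no real obstacle: the only thing to check is that the bijection from Theorem \ref{ThmB} is compatible with the identifications ``connection on $P,Q$ $\leftrightarrow$ connection on $Q^\mathsf{op},P^\mathsf{op}$'' and ``right adjoint of $R$ $\leftrightarrow$ left adjoint of $R^\mathsf{op}$,'' which is immediate from the defining biconditionals $x\,R\,y\Leftrightarrow x\leqslant f^R(y)$ and $y\,R^\mathsf{op}\,x \Leftrightarrow f_{R^\mathsf{op}}(y)\leqslant^\mathsf{op} x$. Alternatively, one could repeat the argument of Theorem \ref{ThmB} line by line with the inequalities reversed, but this would merely duplicate work rather than clarify anything.
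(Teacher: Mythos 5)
Your proposal is correct and matches the paper's approach exactly: the paper gives no separate proof of Theorem~\ref{ThmC}, simply introducing it with ``Dually to the previous theorem,'' which is precisely the reduction to Theorem~\ref{ThmB} via $R^\mathsf{op}\colon Q^\mathsf{op}\to P^\mathsf{op}$ that you carry out. Your write-up just makes explicit the routine checks the paper leaves implicit.
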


\begin{remark}
  The concept of a Galois connection between posets, in its full generality, goes back at least to \cite{Ore44}. In categorical terms, they are adjunctions between posets and the above theorems are specializations of well-known facts about representable profunctors.
\end{remark}

\section{Modular connections}

When $P$ and $Q$ are bounded modular lattices, a connection $R\colon P\to Q$ is \emph{modular} in the sense of M.~Grandis (see \cite{Gra13} and the references there) when it is an adjoint connection and the two dual laws below hold, where $\bot$ denotes the bottom element of $Q$ and $\top$ denotes the top element of $P$.
\begin{itemize}
\item[(RM0)] $f^R(f_R(x))=x\vee f^R(\bot)$, 
\item[(LM0)] $f_R(f^R(y))=y\wedge f_R(\top)$. 
\end{itemize}
In the setting of modular lattices, the laws (RM0) and (LM0) are equivalent to (RF0) and (LF0) (which we state in Section~\ref{sec:principal}).

The laws above play a significant role in projective homological algebra \cite{Gra12,Gra13}. They are also important beyond the setting of modular lattices. As shown in \cite{Jan14}, they characterize  protomodularity \cite{B91} of regular categories (when applied to direct-inverse image adjunctions between subobject posets), which play an important role both in categorical algebra and in universal algebra, where they are closely related to BIT varieties in the sense of A.~Ursini \cite{Urs73} (see \cite{BB04} and the references there). The work \cite{Jan14} led to ``noetherian forms'' \cite{GJ19}, a self-dual framework for establishing homomorphism theorems for group-like structures. The laws above are among the axioms for this framework. These developments are closely related to the fact that in group theory, the laws (RM0) and (LM0) describe the Lattice Isomorphism Theorem, which, as shown in \cite{MB99}, can be used to derive other isomorphism theorems.

The significance of modular connections in the broader theory of Galois connections is recognized in \cite{BJ72}, where they are called ``weakly regular'' mappings. See Example~13.2 there for how modular connections relate to the property of modularity of a lattice (see also \cite{Man19}).

The following result is a direct consequence of Proposition~1 in \cite{Jan14}, hence we omit the proof. The equivalence of (LM0) and (LM1) in this theorem can also be found in \cite{NZ14}; in the slightly more restricted context of bounded lattices, this equivalence has been proved already in \cite{BJ72}.

\begin{theorem} For any adjoint connection $R\colon P\to Q$ between posets $P$ and $Q$, the following conditions are equivalent:
\begin{itemize}
\item[(LM1)] The direct image of $f_R$ is down-closed, i.e., whenever $c\leqslant f_R(b)$, we have $c=f_R(a)$ for some $a\in P$.

\item[(LM2)] Whenever $c\leqslant d$ in $Q$, the element $f_R(f^R(c))$ is the meet of $c$ and $f_R(f^R(d))$, i.e., $f_R(f^R(c))=c\wedge f_R(f^R(d))$.

\item[(LM3)] Whenever some meet $c\wedge d$ exists in $Q$, we have: $f_R(f^R(c\wedge d))=c\wedge f_R(f^R(d))$.
\end{itemize}
Whenever $P$ has a top element, these conditions are further equivalent to (LM0).
\end{theorem}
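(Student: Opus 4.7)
The plan is to prove the three-way equivalence (LM1) $\Leftrightarrow$ (LM2) $\Leftrightarrow$ (LM3) as a cycle, and then, under the assumption that $P$ has a top element $\top$, close the loop with (LM1) $\Rightarrow$ (LM0) $\Rightarrow$ (LM1). Throughout I will rely on the standard adjunction identities $f_R(f^R(c))\leqslant c$ and $x\leqslant f^R(f_R(x))$, together with the observation that an element $c$ of $Q$ lies in the image of $f_R$ if and only if $c=f_R(f^R(c))$: one direction is the counit, and if $c=f_R(a)$ then $a\leqslant f^R(f_R(a))=f^R(c)$ forces $c=f_R(a)\leqslant f_R(f^R(c))\leqslant c$.

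For (LM1) $\Rightarrow$ (LM2), given $c\leqslant d$ in $Q$, the element $f_R(f^R(c))$ is manifestly a lower bound of $c$ (by the counit) and of $f_R(f^R(d))$ (by monotonicity of $f_R\circ f^R$). To see it is the greatest such, I would take any lower bound $e$; since $e\leqslant f_R(f^R(d))$, applying (LM1) with $b=f^R(d)$ expresses $e$ as $f_R(a)$ for some $a\in P$, and then $e\leqslant c$ combined with the adjunction yields $a\leqslant f^R(c)$, so $e=f_R(a)\leqslant f_R(f^R(c))$.

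The equivalence (LM2) $\Leftrightarrow$ (LM3) is straightforward: for (LM2) $\Rightarrow$ (LM3), apply (LM2) to $c\wedge d\leqslant d$ and use $f_R(f^R(d))\leqslant d$ to absorb $d$ in the resulting expression $(c\wedge d)\wedge f_R(f^R(d))=c\wedge f_R(f^R(d))$; the reverse direction specializes (LM3) to $c\leqslant d$, for which $c\wedge d=c$. For (LM2) $\Rightarrow$ (LM1), take $d=f_R(b)$; since $d$ lies in the image of $f_R$ we have $f_R(f^R(d))=d$, so (LM2) gives $f_R(f^R(c))=c\wedge f_R(b)=c$, which puts $c$ in the image.

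For the final clause, $f_R(\top)$ plays the role of a maximal image element of $Q$. For (LM1) $\Rightarrow$ (LM0), I would verify that $f_R(f^R(y))$ is the meet of $y$ and $f_R(\top)$: it is a lower bound since $f^R(y)\leqslant\top$ gives $f_R(f^R(y))\leqslant f_R(\top)$, and any other lower bound $e\leqslant f_R(\top)$ lies in the image of $f_R$ by (LM1), hence $e=f_R(a)$ with $a\leqslant f^R(y)$, yielding $e\leqslant f_R(f^R(y))$. Conversely, assuming (LM0), if $c\leqslant f_R(b)\leqslant f_R(\top)$ then $f_R(f^R(c))=c\wedge f_R(\top)=c$, putting $c$ in the image. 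The main subtlety throughout is that $Q$ is not assumed to have all meets or a top, so the existence of the meets appearing in (LM0), (LM2) and (LM3) is part of what those conditions assert, and one must take care to extract meets only from the hypotheses at hand rather than presuming them ambient in $Q$.
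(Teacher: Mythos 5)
Your proof is correct. Note that the paper itself does not supply an argument for this theorem: it states that the result ``is a direct consequence of Proposition~1 in [Jan14]'' and omits the proof, so there is no in-text argument to compare against; your submission in effect fills that gap with a self-contained, elementary verification. Each step checks out: the characterization of image elements as those fixed by $f_R\circ f^R$ is the standard idempotency of the adjunction; the implication (LM1)~$\Rightarrow$~(LM2) correctly exhibits $f_R(f^R(c))$ as the greatest lower bound by representing an arbitrary lower bound $e\leqslant f_R(f^R(d))$ as $f_R(a)$ and transposing $f_R(a)\leqslant c$ across the adjunction; the absorption step in (LM2)~$\Rightarrow$~(LM3) is justified because $f_R(f^R(d))\leqslant d$ makes the sets of common lower bounds of $\{c\wedge d,\, f_R(f^R(d))\}$ and of $\{c,\, f_R(f^R(d))\}$ coincide; and the two arguments involving $\top$ are the evident specializations. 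You are also right to flag, and to handle correctly, the one genuine subtlety: since $Q$ is not assumed to have binary meets, the equalities in (LM0), (LM2) and (LM3) must be read as asserting the existence of the relevant meets, and your lower-bound arguments never presuppose a meet that has not been produced from the hypotheses.
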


Dually, we have:

\begin{theorem} For any adjoint connection $R\colon P\to Q$ between posets $P$ and $Q$, the following conditions are equivalent:
\begin{itemize}
\item[(RM1)] The direct image of $f^R$ is up-closed, i.e., whenever $b\geqslant f^R(c)$, we have $b=f^R(d)$ for some $a\in P$.

\item[(RM2)] Whenever $b\geqslant a$ in $P$, the element $f^R(f_R(b))$ is the join of $b$ and $f^R(f_R(a))$, i.e., $f^R(f_R(b))=b\vee f^R(f_R(a))$.

\item[(RM3)] Whenever some join $a\vee b$ exists in $P$, we have: $f^R(f_R(a\vee b))=a\vee f^R(f_R(b))$.
\end{itemize}
Whenever $Q$ has a bottom element, these conditions are further equivalent to (RM0).
\end{theorem}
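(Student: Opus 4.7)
The plan is to reduce the statement to the previous theorem by invoking the self-dual nature of the framework, as already emphasized earlier in the paper: for any connection $R\colon P\to Q$, the opposite relation $R^{\mathsf{op}}\colon Q^{\mathsf{op}}\to P^{\mathsf{op}}$ is again a connection, and if $R$ is an adjoint connection then so is $R^{\mathsf{op}}$, with the roles of the two adjoints interchanged: the left adjoint of $R^{\mathsf{op}}$ is $f^R$ (now read as a monotone map $Q^{\mathsf{op}}\to P^{\mathsf{op}}$) and the right adjoint is $f_R$. Under this correspondence, meets become joins, tops become bottoms, and down-closed subsets become up-closed subsets.

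First, I would set up this dictionary carefully and then translate each of (RM1), (RM2), (RM3), (RM0) into the corresponding condition for $R^{\mathsf{op}}$. The assertion that the direct image of $f^R$ is up-closed in $P$ is exactly the assertion that the direct image of the left adjoint of $R^{\mathsf{op}}$ is down-closed in $P^{\mathsf{op}}$, which is (LM1) for $R^{\mathsf{op}}$. Similarly, the identity in (RM2), rewritten with $\leqslant$ reversed and $\vee$ replaced by $\wedge$, becomes (LM2) for $R^{\mathsf{op}}$, and (RM3) becomes (LM3) for $R^{\mathsf{op}}$ (existence of joins in $P$ is existence of meets in $P^{\mathsf{op}}$). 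Finally, the hypothesis that $Q$ has a bottom element is the hypothesis that $Q^{\mathsf{op}}$ has a top element, which is the side condition for (LM0) in the previous theorem, and (RM0) itself is precisely (LM0) for $R^{\mathsf{op}}$.

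Once this translation is laid out, the equivalence of (RM1), (RM2), (RM3), as well as the further equivalence with (RM0) under the stated hypothesis on $Q$, is an immediate consequence of the previous theorem applied to $R^{\mathsf{op}}$, so the proof requires no new argument beyond invoking that result. The only step that calls for care is the bookkeeping: one must check that each of (RM2) and (RM3) coincides, term for term, with the corresponding (LM*) formula after reversing the order, swapping the two adjoints, and exchanging meet with join. I do not expect any genuine obstacle here; the statement follows formally from the previous theorem by duality, and this is presumably why the authors present it with only the word ``dually''.
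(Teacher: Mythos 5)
Your proposal is correct and is exactly the argument the paper intends: the theorem is introduced with ``Dually, we have:'' and is meant to follow from the preceding (LM1)--(LM3)/(LM0) theorem by passing to $R^{\mathsf{op}}\colon Q^{\mathsf{op}}\to P^{\mathsf{op}}$, with the left and right adjoints swapped and the side condition ``$P$ has a top element'' becoming ``$Q$ has a bottom element,'' precisely as in your dictionary. Your careful verification of the translation is sound, so nothing further is needed.
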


\section{Principal elements and Frobenius reciprocity}\label{sec:principal}


Consider now a commutative quantale, i.e., a complete lattice $L$ equipped with an associative and commutative binary operation 
$$L\times L\rightarrow L,\quad (a,b)\mapsto a\cdot b = ab$$ such that $\cdot$ preserves joins in each argument. By well-known properties of Galois connections, for each fixed $e\in L$, the map
$$b\mapsto be$$
is a left adjoint in an adjoint connection, where the values of the right adjoint are written as follows:
$$a\mapsto a:e.$$
In the context of such lattice, in abstract commutative ideal theory \cite{Dil62} (see also \cite{AJ01} and the references there), an element $e$ is said to be \emph{principal} when the following dual laws hold:
(i) $c\wedge de=((c:e)\wedge d)e$, (ii) $a\vee (b:e)=(ae\vee b):e$. These properties are identical to the laws (LF0) and (RF0), respectively, of a general adjoint connection:
\begin{itemize}
\item[(RF0)] $a\vee f^R(c)=f^R(f_R(a)\vee c)$.
\item[(LF0)] $c\wedge f_R(b)=f_R(f^R(c)\wedge b)$.
\end{itemize}
The law (LF0) was called \emph{Frobenius reciprocity} in \cite{Law70}. It plays a significant role in the theory of doctrines -- a functorial approach to mathematical logic.
In a slightly more abstract form, it is one of the axioms (``modular law'') for the concept of an allegory \cite{FS90}, which is an abstraction of the category of relations in a regular category.
The laws (RF0) and (LF0) are easily seen to be stable under composition.

Note that we can get (RM0) from (RF0) by setting $c=\bot$. Dually, by setting $b=\top$ in (LF0) we obtain (LM0).

The laws (RM0) and (LM0) also feature in abstract commutative ideal theory. They define ``weak principal elements''. Lemma~1(a) from \cite{AJ96} is a special case of a general property of adjoint connections: the equivalence of (LM0) and (LM1). Lemma~1(d) from \cite{AJ96} generalizes similarly to arbitrary adjoint connections. For semilattices, the equivalence of (RM0) and (RM5) below has been noted in \cite{NZ14}, whose very goal is to abstract the theory of principal elements to mappings (connections satisfying (LF0) and (RF0) are called ``principal mappings'' in \cite{NZ14}). For complete lattices, equivalence of (RM0) and (RM4) is noted in \cite{Man19}.

\begin{theorem}
Let $R$ be a connection from a poset $P$ to a poset $Q$, where $P$ has binary joins and $Q$ has a bottom element. Then (RM0) is equivalent to each of the the following laws:
\begin{itemize}
\item[(RM4)] $f_R(a)=f_R(b)$ implies $a\vee f^R(\bot)=b\vee f^R(\bot)$.

\item[(RM5)] $f_R(a)\leqslant f_R(b)$ implies $a\leqslant b\vee f^R(\bot)$.
\end{itemize}
\end{theorem}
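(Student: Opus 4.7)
The plan is to establish the chain of implications (RM0) $\Rightarrow$ (RM5) $\Rightarrow$ (RM4) $\Rightarrow$ (RM0), which is the most economical cycle given the shapes of the three statements. Throughout, I will freely use that $(f_R,f^R)$ is a Galois connection: both maps are monotone, and the standard triangle identity $f_R(f^R(f_R(x)))=f_R(x)$ holds.

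For (RM0) $\Rightarrow$ (RM5), assume $f_R(a)\leqslant f_R(b)$. Applying the monotone map $f^R$ and then invoking (RM0) on both sides yields $a\vee f^R(\bot)\leqslant b\vee f^R(\bot)$, and since $a\leqslant a\vee f^R(\bot)$ the conclusion $a\leqslant b\vee f^R(\bot)$ follows. For (RM5) $\Rightarrow$ (RM4), apply (RM5) twice to the two inequalities implicit in $f_R(a)=f_R(b)$; this yields $a\leqslant b\vee f^R(\bot)$ and $b\leqslant a\vee f^R(\bot)$, from which $a\vee f^R(\bot)=b\vee f^R(\bot)$ is immediate (using also $f^R(\bot)\leqslant a\vee f^R(\bot)$ and its symmetric analogue). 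Both of these directions are routine; the only ingredient beyond monotonicity is the idea of symmetrising (RM5) to obtain (RM4).

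The step I expect to require slightly more care is (RM4) $\Rightarrow$ (RM0). Set $y:=f^R(f_R(x))$; the goal is to show $y = x\vee f^R(\bot)$. By the Galois triangle identity, $f_R(y)=f_R(f^R(f_R(x)))=f_R(x)$, so (RM4) gives $y\vee f^R(\bot)=x\vee f^R(\bot)$. It then remains to observe that $f^R(\bot)\leqslant y$, which follows from $\bot\leqslant f_R(x)$ by monotonicity of $f^R$; hence $y=y\vee f^R(\bot)=x\vee f^R(\bot)$, which is exactly (RM0).

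The main (mild) obstacle is spotting that to deduce (RM0) from the purely implicational statement (RM4) one must identify the right ``test element'', namely $y=f^R(f_R(x))$, and verify that $f^R(\bot)$ sits below it so that the bottom-adjustment in (RM4) becomes harmless. The hypotheses that $P$ has binary joins (to form $b\vee f^R(\bot)$, $x\vee f^R(\bot)$, etc.) and that $Q$ has a bottom element (so that $f^R(\bot)$ exists) are used exactly where expected and nowhere else.
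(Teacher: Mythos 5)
Your proof is correct, but you traverse the cycle of implications in the opposite direction from the paper: you prove (RM0) $\Rightarrow$ (RM5) $\Rightarrow$ (RM4) $\Rightarrow$ (RM0), whereas the paper proves (RM0) $\Rightarrow$ (RM4) $\Rightarrow$ (RM5) $\Rightarrow$ (RM0). The substantive differences are in the two ``non-obvious'' legs. To get from (RM4) to (RM5), the paper uses that the left adjoint preserves joins, writing $f_R(b)=f_R(a)\vee f_R(b)=f_R(a\vee b)$ and then applying (RM4) to the pair $b$, $a\vee b$; your direction (RM5) $\Rightarrow$ (RM4) is instead a pure symmetrisation and needs nothing beyond the order structure, so you never invoke join-preservation of $f_R$ at all. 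To close the loop, the paper derives (RM0) from (RM5) by feeding the triangle identity $f_R(f^R(f_R(b)))=f_R(b)$ into (RM5) and quoting the standard inequality $b\vee f^R(\bot)\leqslant f^R(f_R(b))$ for the converse; you derive (RM0) from (RM4) using the same triangle identity but must additionally observe that $f^R(\bot)\leqslant f^R(f_R(x))$ so that the join with $f^R(\bot)$ on the left-hand side is absorbed. Both arguments are equally elementary and use the hypotheses (binary joins in $P$, bottom in $Q$, and the existence of both adjoints) in the same places; your version has the minor aesthetic advantage of isolating all the Galois-connection machinery in a single step, while the paper's has the advantage that the reverse inequality in the final step comes for free from a standard fact rather than requiring the absorption observation.
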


\begin{proof}
(RM0) clearly implies (RM4). Suppose (RM4) holds and $f_R(a)\leqslant f_R(b)$. Then $$f_R(b)=f_R(a)\vee f_R(b)=f_R(a\vee b)$$ (since in a Galois connection, the left adjoint map preserves joins). By (RM4), we then have 
$$b\vee f^R(\bot)=a\vee b\vee f^R(\bot),$$
i.e., $a\leqslant b\vee f^R(\bot)$. Thus, (RM4) implies (RM5).

Next, we show that (RM5) implies (RM0). Suppose (RM5) holds. Since $f_R(f^R(f_R(b)))=f_R(b)$ (by a general property of Galois connections), we have
$$f^R(f_R(b))\leqslant b\vee f^R(\bot).$$ The reverse inequality holds for any adjoint connection. This proves (RM0).
\end{proof}

Dually, we have:

\begin{theorem}
Let $R$ be a connection from a poset $P$ to a poset $Q$, where $P$ has a top element and $Q$ has binary meets. Then (LM0) is equivalent to each of the following laws:
\begin{itemize}
\item[(LM4)] $f^R(c)=f^R(d)$ implies $c\wedge f_R(\top)=d\wedge f_R(\top)$.

\item[(LM5)] $f^R(d)\geqslant f^R(c)$ implies $d\geqslant c\wedge f_R(\top)$.
\end{itemize}
\end{theorem}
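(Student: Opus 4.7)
The plan is to derive this theorem by formal dualization of the preceding one, leveraging the self-duality of connections noted in Section~2: a relation $R\colon P\to Q$ is a connection exactly when $R^{\mathsf{op}}\colon Q^{\mathsf{op}}\to P^{\mathsf{op}}$ is one, and under this correspondence left and right adjoints swap, so that $f_{R^{\mathsf{op}}}=f^R$ and $f^{R^{\mathsf{op}}}=f_R$ when regarded at the level of sets.

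First I would check that the hypotheses transfer: since $P$ has a top element and $Q$ has binary meets, $P^{\mathsf{op}}$ has a bottom element and $Q^{\mathsf{op}}$ has binary joins, so the previous theorem applies to $R^{\mathsf{op}}\colon Q^{\mathsf{op}}\to P^{\mathsf{op}}$ (with $Q^{\mathsf{op}}$ in the role of the source and $P^{\mathsf{op}}$ in the role of the target). Next I would translate each of (RM0), (RM4) and (RM5) for $R^{\mathsf{op}}$ back into statements about $R$ by reversing inequalities, swapping joins with meets, sending the bottom of $P^{\mathsf{op}}$ to the top of $P$, and interchanging the two adjoints. A short verification shows that these translated statements are exactly (LM0), (LM4), and (LM5) (the latter possibly after relabelling the bound variables $c$ and $d$, which is harmless since the laws are universally quantified). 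Equivalence of the three conditions then transfers at once from the previous theorem.

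The main obstacle is simply bookkeeping the duality: one must check carefully that $f_R(\top)$ in (LM4) and (LM5) corresponds to $f^{R^{\mathsf{op}}}(\bot)$ in the corresponding statements for $R^{\mathsf{op}}$, that a meet in $Q$ corresponds to a join in $Q^{\mathsf{op}}$, and that the inequality in (LM5) matches (RM5) for $R^{\mathsf{op}}$ up to relabelling. If one preferred to avoid this bookkeeping and argue directly, the three implications would mirror those of the preceding proof word-for-word: (LM0)$\Rightarrow$(LM4) is immediate; (LM4)$\Rightarrow$(LM5) uses that $f^R$ preserves meets (so $f^R(d)\geqslant f^R(c)$ gives $f^R(c\wedge d)=f^R(c)$, whence (LM4) forces $c\wedge f_R(\top)=c\wedge d\wedge f_R(\top)\leqslant d$); and (LM5)$\Rightarrow$(LM0) applies (LM5) with $c=y$ and $d=f_R(f^R(y))$, using the triangle identity $f^R(f_R(f^R(y)))=f^R(y)$ to obtain one inequality, while the opposite inequality $f_R(f^R(y))\leqslant y\wedge f_R(\top)$ holds in every adjoint connection.
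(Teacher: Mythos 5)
Your proposal is correct and matches the paper exactly: the paper offers no separate proof for this theorem, simply prefacing it with ``Dually, we have,'' i.e., it is obtained from the preceding (RM0)/(RM4)/(RM5) theorem by the same dualization you carry out (passing to $R^{\mathsf{op}}\colon Q^{\mathsf{op}}\to P^{\mathsf{op}}$, swapping the adjoints, and relabelling variables). Your bookkeeping of the hypotheses and of $f_R(\top)$ versus $f^{R^{\mathsf{op}}}(\bot)$ is accurate, and your optional direct argument faithfully mirrors the paper's proof of the dual statement.
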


The following is a generalization of Lemma 1(b,e) from \cite{AJ96} to arbitrary adjoint connections. Connections between lattices that satisfy (LF0) and (RF0) are called ``strongly range-closed'' mappings in \cite{BJ72}, which contains the result about the equivalence of (LF1) and (LF0) for bounded lattices, and for lattices: in the form of an exercise with a hint; the proof presented below makes use of this hint too (this can also be found in \cite{NZ14}). Below we use the notation $a^\downarrow=\{b\mid b\leqslant a\}$ and $a^\uparrow=\{b\mid b\geqslant a\}$ for an element $a$ in a given poset.

\begin{theorem}\label{ThmA}
Let $R$ be a left adjoint connection between posets, $P\to Q$. The following conditions are equivalent: 
\begin{itemize}
\item[(LF1)] If $c\leqslant f_R(b)$ for $c\in Q$ and $b\in P$, then $c=f_R(a)$ for some $a\in P$ such that $a\leqslant b$.

\item[(LF2)] (LM1) holds for the restriction $a^\downarrow\to f_R(a)^\downarrow$ of $f_R$, for each $a\in P$.
\end{itemize}
When $R$ is an adjoint connection and $P$ and $Q$ have binary meets, the conditions above are further equivalent to (LF0). 
\end{theorem}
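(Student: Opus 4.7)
The plan is to first establish the unconditional equivalence (LF1)$\Leftrightarrow$(LF2), and then, under the additional hypotheses of a right adjoint and binary meets, to close the loop by proving (LF0)$\Leftrightarrow$(LF1).

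For (LF1)$\Rightarrow$(LF2), I would fix $p\in P$ and take any $b\leqslant p$ together with $c\leqslant f_R(b)$. Applying (LF1) produces $a\leqslant b$ with $c=f_R(a)$; then $a\leqslant b\leqslant p$ shows $a\in p^\downarrow$, so (LM1) holds for the restriction $f_R\colon p^\downarrow\to f_R(p)^\downarrow$. Conversely, given $b\in P$ and $c\leqslant f_R(b)$, I would instantiate (LF2) with $p=b$: the restricted form of (LM1) on $b^\downarrow\to f_R(b)^\downarrow$ yields some $a\leqslant b$ with $c=f_R(a)$, which is exactly (LF1).

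Now assume $R$ is an adjoint connection and both $P$ and $Q$ admit binary meets. To derive (LF1) from (LF0), I would observe that whenever $c\leqslant f_R(b)$, the identity (LF0) gives
$$c=c\wedge f_R(b)=f_R(f^R(c)\wedge b),$$
so $a=f^R(c)\wedge b\leqslant b$ is the required preimage witness. For the reverse implication (LF1)$\Rightarrow$(LF0), the inequality $f_R(f^R(c)\wedge b)\leqslant c\wedge f_R(b)$ holds automatically by monotonicity of $f_R$ (Theorem~\ref{ThmB}) together with the Galois counit $f_R(f^R(c))\leqslant c$. For the opposite direction, I would apply (LF1) to $c\wedge f_R(b)\leqslant f_R(b)$ to obtain some $a\leqslant b$ with $f_R(a)=c\wedge f_R(b)$; since $f_R(a)\leqslant c$ translates by adjunction to $a\leqslant f^R(c)$, this forces $a\leqslant f^R(c)\wedge b$, whence $c\wedge f_R(b)=f_R(a)\leqslant f_R(f^R(c)\wedge b)$.

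The only real subtlety is parsing (LF2): once one notices that (LM1) imposed on the restriction $p^\downarrow\to f_R(p)^\downarrow$ is precisely the statement that preimages can be found \emph{below a prescribed bound} $p$, and that specialising $p=b$ then recovers (LF1), the remaining arguments are standard Galois-connection manipulations — essentially the hint from \cite{BJ72}, namely that $f^R(c)\wedge b$ serves as the canonical preimage witness whenever $c\leqslant f_R(b)$.
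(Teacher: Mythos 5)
Your proposal is correct and follows essentially the same route as the paper's proof: the (LF1)$\Leftrightarrow$(LF2) equivalence by specialising the restriction at $p=b$ (which the paper dismisses as trivial), and the (LF1)$\Leftrightarrow$(LF0) argument via the automatic inequality $f_R(f^R(c)\wedge b)\leqslant c\wedge f_R(b)$ together with the witness $a\leqslant b$ for $c\wedge f_R(b)\leqslant f_R(b)$ satisfying $a\leqslant f^R(c)$ — exactly the hint from \cite{BJ72} that the paper also uses. No gaps; you merely spell out in more detail the steps the paper leaves to the reader.
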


\begin{proof}
The equivalence of (LF1) and (LF2) is trivial. Now assume $R$ is an adjoint connection and that $P$ and $Q$ have binary meets. Suppose (LF1) holds. The inequality $f_R(f^R(c)\wedge b)\leqslant c\wedge f_R(b)$ holds for any adjoint connection. Since $c\wedge f_R(b)\leqslant f_R(b)$, we get that $c\wedge f_R(b)=f_R(a)$ for some $a\leqslant b$. Then $f_R(a)\leqslant c$ and so $a\leqslant f^R(c)$. This gives
$$c\wedge f_R(b)=f_R(a)\leqslant f_R(f^R(c)\wedge b),$$ as desired.
Conversely, the implication (LF0) $\Rightarrow$ (LF1) is immediate.
\end{proof}

Dually, we have:

\begin{theorem}
Let $R$ be a right adjoint connection between posets, $P\to Q$. The following conditions are equivalent: 
\begin{itemize}
\item[(LF1)] If $b\geqslant f^R(c)$ for $b\in P$ and $c\in Q$, then $b=f^R(d)$ for some $d\in Q$ such that $d\geqslant c$.

\item[(LF2)] (RM1) holds for the restriction $c^\uparrow\to f^R(c)^\uparrow$ of $f^R$, for each $c\in P$. 
\end{itemize}
When $R$ is an adjoint connection and $P$ and $Q$ have binary joins, the conditions above are further equivalent to (RF0). 
\end{theorem}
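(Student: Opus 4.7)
The plan is to mirror the proof of Theorem~\ref{ThmA} step by step under the poset duality that swaps $\leqslant$ with $\geqslant$, meets with joins, $\bot$ with $\top$, and $f_R$ with $f^R$. The statement here is precisely the image of Theorem~\ref{ThmA} under this duality applied to $R^{\mathsf{op}}\colon Q^{\mathsf{op}}\to P^{\mathsf{op}}$, so in principle one could invoke Theorem~\ref{ThmA} directly; in practice it is cleaner to reproduce the steps.

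First I would dispose of (LF1) $\Leftrightarrow$ (LF2): unfolding what (RM1) says for the restriction $f^R\colon c^{\uparrow}\to f^R(c)^{\uparrow}$ yields precisely (LF1), with $c$ playing the role of the base element and $d$ the element above $c$ with $f^R(d)=b$. Next, assuming $R$ is an adjoint connection and that $P$ and $Q$ have binary joins, the easy direction (RF0) $\Rightarrow$ (LF1) comes by specialization: given $b\geqslant f^R(c)$, substitute $a=b$ into (RF0) to read off $b=b\vee f^R(c)=f^R(f_R(b)\vee c)$, so that $d:=f_R(b)\vee c\geqslant c$ witnesses (LF1).

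The remaining direction (LF1) $\Rightarrow$ (RF0) is the heart of the argument. The strategy is to establish both inequalities in $a\vee f^R(c)=f^R(f_R(a)\vee c)$. The inequality $a\vee f^R(c)\leqslant f^R(f_R(a)\vee c)$ is general for adjoint connections, following from the unit $a\leqslant f^R(f_R(a))$ together with the monotonicity of $f^R$. For the reverse inequality, I would apply (LF1) to the element $a\vee f^R(c)\geqslant f^R(c)$ to obtain some $d\geqslant c$ with $a\vee f^R(c)=f^R(d)$. From $a\leqslant f^R(d)$ one obtains $f_R(a)\leqslant d$ by adjunction, and combining with $c\leqslant d$ gives $f_R(a)\vee c\leqslant d$; applying the monotone $f^R$ then yields $f^R(f_R(a)\vee c)\leqslant f^R(d)=a\vee f^R(c)$, as required.

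The main obstacle is purely bookkeeping: ensuring the duality is applied consistently, and in particular that one feeds $a\vee f^R(c)$ (rather than $a$ itself, which need not lie above $f^R(c)$) into (LF1). No conceptual difficulty beyond this arises, since the dual of the inequality $f_R(f^R(c)\wedge b)\leqslant c\wedge f_R(b)$ used in Theorem~\ref{ThmA} is equally available on the join side here.
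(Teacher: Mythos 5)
Your proof is correct and matches the paper's approach: the paper states this theorem with no separate proof, deriving it as the formal dual of Theorem~\ref{ThmA}, and your dualized steps (including feeding $a\vee f^R(c)$ rather than $a$ into (LF1)) reproduce exactly the dual of the paper's argument for that theorem.
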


Characterizations of adjoint connections between complete lattices satisfying the conditions studied in this paper, motivated by viewing connections as morphisms between complete lattices, can be found \cite{Man19}. This approach relates closely to the work of Grandis (see \cite{Gra12,Gra13} and the references there), where adjoint connections between various lattices are studied as morphisms in suitable categories. See also \cite{NZ14,BJ72} (cited in \cite{Man19}) for related results.

It is shown in \cite{NZ14} that for an adjoint connection $R\colon P\to Q$ between bounded lattices, if $P$ is modular, then (LM0)$\Leftrightarrow$(LF0) and if $Q$ is modular, then (RM0)$\Leftrightarrow$(RF0). This refines the observation made in \cite{Gra12} that: when $P$ and $Q$ are both modular lattices, the conjunction of (LM0) and (RM0) is equivalent to the conjunction of (LF0) and (RF0). It follows from this observation that modular connections in Grandis exact categories \cite{Gra12} satisfy Frobenius reciprocity and its dual. The same is not true for noetherian forms, as noted in \cite{DGJ24}, where the link between \cite{Gra12} and \cite{Law70} was first pointed out.

\begin{remark}
 As noted in \cite{Jan14}, in categorical terms, (LM0) states that the counit of the adjunction is a cartesian natural transformation. As also noted there, (RM0) is equivalent to the monad corresponding to the adjunction being nullary in the sense of \cite{CJ95}. As far as we know, apart from a minor remark in \cite{Jan14}, the extension of the characterizations of (LM0) and (LF0) contained in this paper to adjunctions between general categories has not been pursued.
\end{remark}

\section{Conclusion}

The present paper brings together two streams of literature that explore conditions on connections considered in this paper. These two streams are represented by \cite{DGJ24} and \cite{Man19}, and appropriate references there. As these conditions, in one or another form, have arisen independently in a number of unrelated works \cite{Dil62,Law70,Urs73,MB99,BB04,Gra12}, it is quite likely that there are also other streams of literature that we have missed and that rediscover and/or exploit the same conditions. Compiling a complete library of all such sources could be worthwhile.

\end{document}